\newtheorem{thm}{Theorem}
\newtheorem{lem}{Lemma}
\newcommand{\A}{{\mathcal A}}
\newcommand{\U}{{\mathcal U}}
\newcommand{\es}{{\mathcal S}}
\newcommand{\D}{{\mathbb D}}
\author[M. Obradovi\'{c}]{Milutin Obradovi\'{c}}
\address{Department of Mathematics,
Faculty of Civil Engineering, University of Belgrade,
Bulevar Kralja Aleksandra 73, 11000, Belgrade, Serbia}
\email{obrad@grf.bg.ac.rs}
\author[N. Tuneski]{Nikola Tuneski}
\address{Department of Mathematics and Informatics, Faculty of Mechanical Engineering, Ss. Cyril and Methodius
University in Skopje, Karpo\v{s} II b.b., 1000 Skopje, Republic of North Macedonia.}
\email{nikola.tuneski@mf.edu.mk}
\subjclass[2020]{30C45, 30C50, 30C55}
\keywords{univalent functions, class $\U$, symmetric Toeplitz determinants, estimate.}
\title[Symmetric Toeplitz determinants of univalent functions]{Symmetric Toeplitz determinants of some classes of univalent functions}
\begin{document}

\begin{abstract}
In this paper we consider estimates of symmetric Toeplitz determinants $T_{q,n}(f)$ for the class $\U$ and for the general class $\es$ for certain values of $q$ and $n$ ($q,n=1,2,3\ldots$).
\end{abstract}

\maketitle

\section{Introduction and definitions}

\medskip

Let $\mathcal{A}$ denote the class of analytic functions in the open unit disc $\D=\{z:|z|<1\}$ with the form
\begin{equation}\label{e1}
f(z)=z+a_2z^2+a_3z^3+\cdots,
\end{equation}
i.e., satisfying $f(0)=f'(0)-1=0$. By $\es$, $\es\subset\A$, we denote the class of univalent functions in $\D$.

\medskip

For functions $f\in\A$ of form \eqref{e1} we define Hankel determinants by
\[
        H_{q,n}(f) = \left |
        \begin{array}{cccc}
        a_{n} & a_{n+1}& \ldots& a_{n+q-1}\\
        a_{n+1}&a_{n+2}& \ldots& a_{n+q}\\
        \vdots&\vdots&~&\vdots \\
        a_{n+q-1}& a_{n+q}&\ldots&a_{n+q-2}\\
        \end{array}
        \right |,
\]
where $q\geq 1$ and $n\geq 1$. Some examples of second order Hankel determinants are
\begin{equation}\label{e10}
\begin{split}
 H_{2,2}(f) &=  \left |
        \begin{array}{cc}
        a_2 & a_3\\
        a_3 & a_4\\
        \end{array}
        \right | = a_2a_4-a_{3}^2,\\[2mm]
 H_{2,3}(f) &=  \left |
        \begin{array}{cc}
        a_3 & a_4\\
        a_4 & a_5\\
        \end{array}
        \right | = a_3a_5-a_{4}^2.
\end{split}
\end{equation}

\medskip

The problem of finding upper bound of the Hankel determinant (preferebly sharp, i.e., best ones) is extensively studied in the past decade.
For the general class $\es$ of univalent functions few results concerning the Hankel determinant are known, and the best known for the second order case  is due to Hayman (\cite{hayman-68}), saying  that $|H_2(n)|\le An^{1/2}$, where $A$ is an absolute constant, and that this rate of growth is the best possible. 
Another one is \cite{Studia}, where it was proven that $|H_{2}(2)|\leq  1.3614356\ldots$ and $|H_{3}(1)|\leq  1.83056\ldots$, improvements of previous results from \cite{OT-S}.
There are much more results for the subclasses of $\es$ and some references  are \cite{ janteng-06,janteng-07, Kowalczyk-18, DTV-book}. In \cite{Kowalczyk-23}, the authors considered the cases of starlike, convex, strongly starlike and strongly convex functions and found the best possible results.

\medskip

Further, in their paper \cite{firoz} the authors considered the symmetric Toeplitz determinant $T_{q,n}(f)$ for functions $f\in\A$ of the form \eqref{e1} defined by
\[
T_{q,n}(f) = \left |
        \begin{array}{cccc}
        a_{n} & a_{n+1}& \ldots& a_{n+q-1}\\
        a_{n+1}&a_{n}& \ldots& a_{n+q-2}\\
        \vdots&\vdots&~&\vdots \\
        a_{n+q-1}& a_{n+q-2}&\ldots&a_{n}\\
        \end{array}
        \right |,
\]
where $q,n=1,2,3,\ldots$, and $a_1=1$. In particular it is easy to compute that
\begin{equation}\label{e3}
  \begin{split}
     T_{2,2}(f) &= a_2^2-a_3^2, \\
     T_{2,3}(f) &= a_3^2-a_4^2, \\
     T_{3,1}(f) &= 1-2a_2^2+2a_2^2a_3-a_3^2, \\
     T_{3,2}(f) &= (a_2-a_4)(a_2^2 - 2a_3^2 + a_2a_4), \\
     T_{3,3}(f) &= (a_3-a_5)(a_3^2 - 2a_4^2 + a_3a_5).
  \end{split}
\end{equation}

\medskip

In \cite{firoz}, the authors proved the next

\medskip

\noindent
{\bf Theorem A.} {\it If $f\in\es$ has the form \eqref{e1}, then
\[
|T_{2,2}(f)|\le13, \quad |T_{2,3}(f)|\le25, \quad |T_{3,1}(f)|\le24.
\]
All these results are sharp.}

\medskip

In the same paper similar problems were considered for different subclasses of $\es$, such as the classes of starlike and convex functions, and other.

\medskip

In this paper we will study the class $\U$, $\U\subset\es$, defined by the condition
\[\left| \left(\frac{z}{f(z)}\right)^2 f'(z) -1\right|<1 \quad (z\in\D).\]
It is known that this class is not a subset of the class of starlike functions, nor vice-versa, which is rare property and makes it attractive. More about the class $\U$ can be found in \cite{jammu, mari}.

\medskip

Previously, for our work we will need the following lemmas.

\begin{lem}\label{lem-1}
  Let  $f(z)=z+a_2z^2+\cdots$. Then
\begin{equation}\label{star}
  f\in\U \quad \Leftrightarrow \quad \frac{z}{f(z)} = 1-a_2z-z \omega(z),
\end{equation}  
where $\omega(0)=0$, and $|\omega(z)|< 1$, $|\omega'(z)|\le 1$ for all $z\in\D$. From \eqref{star}, for $\omega(z)=c_1z+c_2z^2+\cdots$, follows  
\begin{equation}\label{eq-3}
 a_3 = a_2^2+c_1,\quad a_4 = c_2+2a_2c_1+a_2^3,\quad a_{5}=c_3+2a_2c_2+c_{1}^{2}+3a_2^2 c_{1}+a_{2}^{4},
\end{equation}
where
\begin{equation}\label{eq-5}
|c_{1}|\leq 1,\quad |c_2|\leq\frac{1}{2}\left(1-|c_1|^2\right),\quad   |c_{3}|\leq\frac{1}{3}\left(1-|c_{1}|^{2}-\frac{4|c_{2}|^{2}}{1+|c_{1}|}\right).
\end{equation}
\end{lem}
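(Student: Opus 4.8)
The plan is to collapse the defining condition of $\U$ into a single identity and then read off every assertion from it. Writing $g(z)=z/f(z)$, which for $f\in\es$ is analytic and non-vanishing on $\D$ with $g(0)=1$ (the only zero of $f$ is the simple one at the origin, cancelled by $z$), I would first differentiate $f=z/g$ to get $f'=(g-zg')/g^2$, whence
\[
\left(\frac{z}{f(z)}\right)^2 f'(z)=g^2f'=g(z)-zg'(z).
\]
Since $g(z)=1-a_2z+O(z^2)$, I would then \emph{define} $\omega$ through $z\,\omega(z)=1-a_2z-g(z)$; the right-hand side is $O(z^2)$, so $\omega$ is analytic with $\omega(0)=0$, which is exactly the representation \eqref{star}. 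Substituting $g=1-a_2z-z\omega$ into $g-zg'$ and cancelling, all lower-order terms disappear and one is left with the clean identity
\[
g(z)-zg'(z)-1=z^2\omega'(z),
\]
so that the inequality defining $\U$ becomes $|z|^2\,|\omega'(z)|<1$ for every $z\in\D$.

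Next I would turn this into the stated bounds on $\omega$. The function $h(z)=z^2\omega'(z)$ is analytic, satisfies $|h(z)|<1$ on $\D$, and vanishes to order two at the origin, so $\omega'(z)=h(z)/z^2$ is analytic; applying the maximum-modulus principle to $\omega'$ on $|z|=\rho$ and letting $\rho\to1^-$ gives $|\omega'(z)|\le1$ on $\D$, and integrating $\omega(z)=\int_0^z\omega'(\zeta)\,d\zeta$ along the radial segment yields $|\omega(z)|\le|z|<1$. The converse simply reverses these steps: from $|\omega'|\le1$ one obtains $|g-zg'-1|=|z|^2|\omega'(z)|\le|z|^2<1$, so the represented $f$ lies in $\U$ — the required non-vanishing of $g$, equivalently analyticity of $f=z/g$ on $\D$, being part of the established description of $\U$ in \cite{jammu,mari}. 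I expect the conceptual crux to be precisely this Schwarz-lemma passage from $|z^2\omega'|<1$ to $|\omega'|\le1$, together with the recognition that it is $\omega'$, and not $\omega$ itself, that is the natural bounded function here.

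The coefficient identities \eqref{eq-3} are then purely formal. Inserting $\omega(z)=c_1z+c_2z^2+\cdots$ into \eqref{star} gives $z/f(z)=1-a_2z-c_1z^2-c_2z^3-c_3z^4-\cdots$, and expanding
\[
\frac{f(z)}{z}=\bigl(1-a_2z-c_1z^2-c_2z^3-c_3z^4-\cdots\bigr)^{-1}
\]
as a geometric series and collecting powers of $z$ expresses $a_3,a_4,a_5$ in terms of $a_2,c_1,c_2,c_3$; I would just match coefficients through $z^5$.

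Finally, the bounds \eqref{eq-5} follow from the observation that $\omega'(z)=c_1+2c_2z+3c_3z^2+\cdots$ is a Schur function, i.e.\ analytic with $|\omega'|\le1$ on $\D$. Applying the classical coefficient inequalities for such functions, namely $|p_0|\le1$, $|p_1|\le1-|p_0|^2$ and $|p_2|\le1-|p_0|^2-|p_1|^2/(1+|p_0|)$ for $p(z)=p_0+p_1z+p_2z^2+\cdots$ with $|p|\le1$, to $p_0=c_1$, $p_1=2c_2$, $p_2=3c_3$ yields exactly $|c_1|\le1$, $|c_2|\le\tfrac{1}{2}\bigl(1-|c_1|^2\bigr)$ and $|c_3|\le\tfrac{1}{3}\bigl(1-|c_1|^2-\tfrac{4|c_2|^2}{1+|c_1|}\bigr)$, which are the asserted estimates.
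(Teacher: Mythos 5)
Your argument is correct, and it is essentially a self-contained reconstruction of what the paper outsources to the literature: the paper's own ``proof'' of Lemma \ref{lem-1} consists entirely of pointers to expressions (4), (6) and (7) of \cite{mari}, whereas you actually derive everything. Your key identity is the right one and is the standard mechanism behind the class $\U$: with $g=z/f$ one has $(z/f)^2f'=g-zg'$, and substituting $g=1-a_2z-z\omega$ collapses this to $(z/f)^2f'-1=z^2\omega'(z)$, so membership in $\U$ is exactly $|z^2\omega'(z)|<1$; the maximum-modulus passage to $|\omega'|\le 1$, the geometric-series expansion of $f/z=\bigl(1-a_2z-c_1z^2-\cdots\bigr)^{-1}$ giving \eqref{eq-3}, and the application of the Schur-class coefficient inequalities to $\omega'(z)=c_1+2c_2z+3c_3z^2+\cdots$ giving \eqref{eq-5} all check out (I verified the coefficient matching through $z^4$ in $f/z$ and the substitution $p_0=c_1$, $p_1=2c_2$, $p_2=3c_3$). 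Your identification of the crux --- that it is $\omega'$, not $\omega$, which is the naturally bounded function --- is exactly the point of the representation. Two small matters of hygiene: the phrase ``vanishing to order two at the origin, so $\omega'=h/z^2$ is analytic'' is circular, since $\omega'$ is analytic by construction and $h$ is defined from it, not the other way around; and in the converse direction you correctly flag, but should not gloss over, that one must either assume $f\in\A$ (so that $1-a_2z-z\omega$ is automatically non-vanishing on $\D$) or impose that non-vanishing separately, since for arbitrary $a_2$ and $\omega$ the right-hand side of \eqref{star} can have zeros in $\D$. What your route buys over the paper's is a proof readable without chasing the reference; what it costs is only the need to quote (or prove) the third Schur coefficient inequality $|p_2|\le 1-|p_0|^2-|p_1|^2/(1+|p_0|)$, which the paper implicitly gets from (7) of \cite{mari}.
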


\begin{proof} If $f(z)=z+a_2z^2+\cdots\in \U$, then expression (4) from \cite{mari} leads to $\frac{z}{f(z)} = 1-a_2z-z \omega(z)$. Vice versa, if $\frac{z}{f(z)} = 1-a_2z-z \omega(z)$, then it can be checked directly that $f$ is in $\U$. Expressions \eqref{eq-3} and \eqref{eq-5} follow from (6) and (7) in \cite{mari}, respectively.
\end{proof}

\begin{lem}[\cite{mari, tartu}]\label{lem-2}
Let $f\in\U$. Then:
\begin{itemize}
  \item[(a)] $|H_{2,2}(f)|\le1$;\vspace{1mm}
  \item[(b)] $|H_{2,3}(f)|\le 1.4946575\ldots$;\vspace{1mm}
  \item[(c)] If $a_2=0$, $|H_{2,3}(f)|\le 1$.
\end{itemize}
Estimates (a) and (c) are sharp.
\end{lem}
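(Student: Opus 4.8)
The plan is to express each determinant through the representation of Lemma~\ref{lem-1} and then estimate with the coefficient bounds \eqref{eq-5}, reducing every case to an optimisation in the $\omega$-coefficients. For part~(a) I would substitute \eqref{eq-3} into $H_{2,2}(f)=a_2a_4-a_3^2$; the terms $a_2^4$ and $2a_2^2c_1$ cancel, leaving the clean identity $H_{2,2}(f)=a_2c_2-c_1^2$. Since $f\in\U\subset\es$ forces $|a_2|\le 2$, the triangle inequality together with $|c_2|\le\frac12(1-|c_1|^2)$ gives
\[
|H_{2,2}(f)|\le |a_2||c_2|+|c_1|^2\le 2\cdot\tfrac12(1-|c_1|^2)+|c_1|^2=1.
\]
Sharpness comes from the Koebe function $z/(1-z)^2$, which lies in $\U$ and satisfies $H_{2,2}=-1$.

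For part~(c), setting $a_2=0$ in \eqref{eq-3} collapses the coefficients to $a_3=c_1$, $a_4=c_2$, $a_5=c_3+c_1^2$, whence $H_{2,3}(f)=c_1c_3+c_1^3-c_2^2$. Put $x=|c_1|\in[0,1]$ and $y=|c_2|$. Bounding $|c_3|$ by \eqref{eq-5} and observing that the resulting coefficient of $y^2$, namely $\frac{3-x}{3(1+x)}$, is positive on $[0,1]$, I would insert the extremal value $y=\frac12(1-x^2)$; the estimate then reduces to the single real function $g(x)=\frac{1}{12}(3-2x^2+12x^3-x^4)$. The inequality $g(x)\le 1$ is equivalent to $(x-1)(x^3-11x^2-9x-9)\ge 0$, which holds on $[0,1]$ because both factors are nonpositive there, with equality only at $x=1$. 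The bound is attained by the odd function $z/(1-z^2)\in\U$, for which $H_{2,3}=1$.

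The genuinely hard case is part~(b), where $a_2$ is free. Substituting the full expressions \eqref{eq-3} into $H_{2,3}=a_3a_5-a_4^2$ produces a polynomial in $a_2,c_1,c_2,c_3$ with many interacting cross terms. Because $|H_{2,3}|$ is invariant under the rotation $f(z)\mapsto e^{-i\theta}f(e^{i\theta}z)$ (which sends $H_{2,3}\mapsto e^{6i\theta}H_{2,3}$ and $a_2\mapsto e^{i\theta}a_2$), I would first normalise $a_2=a\in[0,2]$ real, and then apply the triangle inequality and the nested bounds \eqref{eq-5} to obtain a constrained maximisation over $a$, $|c_1|$, $|c_2|$, $|c_3|$ and the surviving phases. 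The main obstacle is exactly this optimisation: in contrast with parts~(a) and~(c), the constraints no longer decouple cleanly, the maximum is attained at a nontrivial critical point, and the extremal value $1.4946575\ldots$ is a root of an auxiliary polynomial with no simpler closed form. I expect the bulk of the effort to be a careful reduction of the number of free phases followed by a rigorous optimisation of the resulting few-variable function.
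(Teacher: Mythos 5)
First, note that the paper does not prove this lemma at all: it is imported verbatim from \cite{mari, tartu}, so there is no in-paper argument to compare against. Judged on its own terms, your treatment of parts (a) and (c) is correct and complete, and it runs on exactly the machinery those sources use. The identity $H_{2,2}(f)=a_2c_2-c_1^2$ is right (the $a_2^4$ and $2a_2^2c_1$ terms do cancel), the estimate $2\cdot\tfrac12(1-|c_1|^2)+|c_1|^2=1$ closes part (a), and the Koebe function is indeed in $\U$ with $H_{2,2}=-1$. For (c), the reduction to $g(x)=\tfrac{1}{12}(3-2x^2+12x^3-x^4)$ checks out, the factorisation $x^4-12x^3+2x^2+9=(x-1)(x^3-11x^2-9x-9)$ is correct, and the second factor is bounded above by $1-9<0$ on $[0,1]$, so $g\le 1$ with equality only at $x=1$, realised by $z/(1-z^2)$.

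The genuine gap is part (b). What you have written there is a plan, not a proof: you reduce to $a_2=a\in[0,2]$ by rotation and announce a constrained optimisation, but you neither carry it out nor produce the value $1.4946575\ldots$, which is the entire content of the statement. This is not a routine omission that the reader can fill in --- the constant is a root of an auxiliary equation, the maximum sits at an interior critical point, and a naive triangle-inequality bound of $|a_3a_5-a_4^2|$ term by term overshoots badly (it does not even recover a bound close to $1.5$). Concretely, you still need to (i) justify that all surviving phases can be aligned or eliminated so that the problem really is a real optimisation in $a$, $|c_1|$, $|c_2|$, $|c_3|$; (ii) exhibit the critical point and verify it is the global maximum over the constraint region defined by \eqref{eq-5}; and (iii) show the resulting value equals $1.4946575\ldots$. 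Until that is done, part (b) is unproved; as the paper does, you could instead simply cite \cite{tartu}, where this computation is carried out.
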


\begin{lem}[\cite{tartu, pale}]\label{lem-3}
Let $f\in\es$. Then:
\begin{itemize}
  \item[(a)] $|H_{2,2}(f)|\le1.3614\ldots$;\vspace{1mm}
  \item[(b)] $|H_{2,3}(f)|\le 4.89869\ldots$;\vspace{1mm}
  \item[(c)] If $a_2=0$, $|H_{2,3}(f)|\le 2.02757\ldots$.
\end{itemize}
\end{lem}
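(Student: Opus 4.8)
The plan is to read all three bounds as estimates of a single second-order Hankel functional, $H_{2,2}(f)=a_2a_4-a_3^2$ or $H_{2,3}(f)=a_3a_5-a_4^2$, over the entire class $\es$. Here, in contrast with the starlike or convex subclasses, there is no coercive structural hypothesis to exploit; the only genuine constraint is univalence, and the standard device that turns univalence into usable coefficient inequalities is the Grunsky system. So I would first record that the Grunsky coefficients $b_{mn}=b_{nm}$ of $f\in\es$, read off from the expansion of $\log\bigl((f(z)-f(\zeta))/(z-\zeta)\bigr)$, satisfy
\[
\Bigl|\sum_{m,n=1}^{N}b_{mn}x_mx_n\Bigr|\le\sum_{n=1}^{N}\frac{|x_n|^2}{n}
\]
for every $N\ge1$ and every $(x_1,\dots,x_N)\in\IC^N$, together with the low-order algebraic relations expressing $a_2,a_3,a_4,a_5$ as polynomials in the $b_{mn}$ with $m+n$ small.

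Next I would substitute those relations into $H_{2,2}$ and $H_{2,3}$, turning each functional into a fixed polynomial in finitely many Grunsky coefficients. Feeding suitable choices of the test vector $(x_1,\dots,x_N)$, with $N$ taken up to the order needed to involve the $b_{mn}$ occurring through $a_5$, into the displayed inequality produces a finite family of quadratic constraints on the $b_{mn}$. The problem then becomes: maximise a fixed real-valued polynomial over a compact semialgebraic region in the finite-dimensional space of the relevant moduli and arguments of the $b_{mn}$. Before optimising I would use the rotation $f(z)\mapsto e^{-i\theta}f(e^{i\theta}z)$ to normalise away one phase and cut down the number of free variables.

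The third step is the constrained optimisation, and this is where the three cases diverge. For part (a) the critical configuration can be pinned down explicitly, which is what makes the constant $1.3614\ldots$ sharp. For part (b) the same scheme yields only the numerical estimate $4.89869\ldots$: the extremal configuration for $|H_{2,3}|$ over $\es$ does not reduce to an exactly solvable critical point, so one settles for a rigorous but non-sharp bound. For part (c) the hypothesis $a_2=0$ forces the leading Grunsky coefficient to vanish, which removes a substantial contribution to $a_3a_5-a_4^2$ and collapses several of the constraints, shrinking the feasible region and lowering the bound to $2.02757\ldots$.

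The main obstacle is precisely this optimisation over $\es$. The target polynomials are non-convex, the Grunsky constraints couple moduli with phases, and---most delicately---truncating to finitely many $b_{mn}$ gives only necessary conditions for univalence, so a bound proved in the truncated model need not hold for all of $\es$. One must therefore argue that increasing the truncation order $N$ cannot raise the maximum, or else accept that the inequality is genuinely only an estimate, which is exactly the status of (b) as opposed to the sharp cases (a) and (c). If the elementary Grunsky inequalities prove too lossy, the natural refinement is to replace them by the stronger Goluzin inequalities, at the cost of a heavier but structurally identical computation.
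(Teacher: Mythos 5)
The paper does not prove this lemma at all: it is quoted from \cite{tartu, pale} (the constant in (a) ultimately coming from the Grunsky--coefficient computation announced in \cite{Studia}), so the only meaningful comparison is with the method of those references, which is indeed the Grunsky-inequality scheme you describe. Your identification of the tool is therefore reasonable, but what you have written is a research plan rather than a proof: none of the three constants $1.3614\ldots$, $4.89869\ldots$, $2.02757\ldots$ is actually derived. The entire content of the lemma lives in the steps you defer --- writing $a_2,\dots,a_5$ explicitly in terms of the Grunsky coefficients, choosing concrete test vectors $(x_1,\dots,x_N)$, and closing out the resulting non-convex constrained maximisation --- and you give no indication of which test vectors to use, how many coefficients survive the rotation normalisation, or how the optimisation terminates in the stated numbers. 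As it stands the statement is not established.

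There is also a logical slip in your discussion of the ``main obstacle''. The Grunsky inequalities for finitely many indices are necessary conditions on the coefficients of any $f\in\es$; hence the feasible region cut out by finitely many of them \emph{contains} the set of coefficient tuples coming from $\es$, and the supremum of your target polynomial over that region is automatically a valid upper bound for all of $\es$. Truncation can only cost sharpness, never validity, so there is nothing to ``argue about increasing the truncation order.'' Two smaller points: the assertion that $a_2=0$ forces the leading Grunsky coefficient to vanish is convention-dependent and not automatic for the coefficients $b_{mn}$ with $m,n\ge1$ that enter the strong Grunsky inequality, so it needs justification before you may ``collapse several of the constraints''; and neither the present paper nor the quoted lemma claims sharpness for any of (a)--(c), so your framing of (a) and (c) as ``the sharp cases'' asserts more than is known.
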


\medskip

\section{Main results}

\begin{thm}\label{th-1}
Let $f\in\U$ be of the form \eqref{e1}. Then
\begin{itemize}
  \item[($i$)] $|T_{2,2}(f)|\le 13$;\vspace{1mm}
  \item[($ii$)] $|T_{2,3}(f)|\le 25$;\vspace{1mm}
  \item[($iii$)] $|T_{3,1}(f)|\le 24$;\vspace{1mm}
  \item[($iv$)] $|T_{3,2}(f)|\le 84$;\vspace{1mm}
  \item[($v$)] $|T_{3,3}(f)|\le 211.8771\ldots$.
\end{itemize}
The inequalities (i)-(iv) are sharp.
\end{thm}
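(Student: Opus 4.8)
The plan is to express everything through the parametrisation of Lemma \ref{lem-1}, combining the formulas \eqref{eq-3} with the univalent coefficient bounds $|a_n|\le n$ (valid on $\es\supset\U$), the estimates \eqref{eq-5}, and the Hankel bounds of Lemma \ref{lem-2}. Throughout, the recurring extremal candidate will be the rotated Koebe function $k_i(z)=z/(1-iz)^2\in\U$ (a rotation of $z/(1-z)^2$, hence in $\U$), whose coefficients are $a_2=2i$, $a_3=-3$, $a_4=-4i$, $a_5=5$. For (i) and (ii) a bare triangle inequality on the \emph{complex} coefficients suffices: by \eqref{e3}, $|T_{2,2}(f)|=|a_2^2-a_3^2|\le|a_2|^2+|a_3|^2\le4+9=13$ and $|T_{2,3}(f)|=|a_3^2-a_4^2|\le9+16=25$. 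Equality forces $|a_2|=2,|a_3|=3$ (resp. $|a_3|=3,|a_4|=4$) with $a_2^2$ and $a_3^2$ (resp. $a_3^2,a_4^2$) of opposite argument, which is exactly what $k_i$ realises.

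For (iii) the crude triangle inequality over-counts (it would only give $26$), so the key step is an algebraic simplification that removes $a_2^2$. Factoring the expression in \eqref{e3} as $T_{3,1}(f)=(1-a_3)(1+a_3-2a_2^2)$ and substituting $a_2^2=a_3-c_1$ from \eqref{eq-3} turns the second factor into $1-a_3+2c_1$, so that
\[
T_{3,1}(f)=(1-a_3)(1-a_3+2c_1).
\]
Then $|T_{3,1}(f)|\le(1+|a_3|)(1+|a_3|+2|c_1|)\le4\cdot6=24$ by $|a_3|\le3$ and $|c_1|\le1$ from \eqref{eq-5}, with equality at $a_3=-3,\ c_1=1$, i.e. for $k_i$.

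The decisive observation for (iv) and (v) is that the quadratic factors in \eqref{e3} split as a Toeplitz plus a Hankel determinant:
\[
a_2^2-2a_3^2+a_2a_4=(a_2^2-a_3^2)+(a_2a_4-a_3^2)=T_{2,2}(f)+H_{2,2}(f),
\]
\[
a_3^2-2a_4^2+a_3a_5=(a_3^2-a_4^2)+(a_3a_5-a_4^2)=T_{2,3}(f)+H_{2,3}(f).
\]
Hence $|T_{3,2}(f)|\le(|a_2|+|a_4|)(|T_{2,2}(f)|+|H_{2,2}(f)|)\le6(13+1)=84$, invoking part (i) and Lemma \ref{lem-2}(a); this is sharp at $k_i$, where the inner factor equals $-13-1=-14$ and $a_2-a_4=6i$. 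For (v) the same factorisation gives $|T_{3,3}(f)|\le(|a_3|+|a_5|)(|T_{2,3}(f)|+|H_{2,3}(f)|)$, and a naive use of part (ii) with Lemma \ref{lem-2}(b) already yields $8(25+1.4946575\ldots)=211.957\ldots$, only marginally above the claimed constant.

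The main obstacle is precisely this last estimate: the factor $a_3-a_5$ and the inner factor $T_{2,3}(f)+H_{2,3}(f)$ share the coefficients $a_3,a_4,a_5$, so they cannot be maximised independently, and the stated value $211.8771\ldots$ (which I do not expect to be sharp, consistent with its exclusion from the sharpness claim) must come from a joint optimisation rather than a product of separate maxima. I would substitute \eqref{eq-3} to express $T_{3,3}(f)$ as an explicit polynomial in $a_2,c_1,c_2,c_3$, pass to the moduli using \eqref{eq-5}, and maximise the resulting real function over the admissible region; carrying out that multivariate optimisation and verifying that its maximum sits strictly below the naive product bound is the delicate part of the argument.
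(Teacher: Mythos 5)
Your parts (i), (ii) and (iv) follow the paper's proof essentially verbatim: the paper also bounds $|T_{2,2}|$ and $|T_{2,3}|$ by $|a_2|^2+|a_3|^2$ and $|a_3|^2+|a_4|^2$, and also splits the inner factor of $T_{3,2}$ as $(a_2^2-a_3^2)+(a_2a_4-a_3^2)$ and invokes Lemma \ref{lem-2}(a), getting $6\cdot(4+9+1)=84$. For (iii) you take a genuinely different and in fact cleaner route: the paper keeps $T_{3,1}=1-2a_2^2+(a_2^2-c_1)(a_2^2+c_1)$ and estimates the three pieces as $1+2|a_2|^2+(|a_2|^2+|c_1|)\,|a_3|\le 1+8+15=24$, whereas your complete factorisation $T_{3,1}=(1-a_3)(1-a_3+2c_1)$ (which I checked: expanding recovers $1-2a_2^2+2a_2^2a_3-a_3^2$ after the substitution $a_2^2=a_3-c_1$) gives $4\cdot 6=24$ in one stroke and makes the equality case $a_3=-3$, $c_1=1$ transparent. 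Both arguments are valid; yours is shorter and exhibits sharpness more directly.

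On (v) your instinct is right, but the resolution is not the joint optimisation you anticipate: the paper's own proof is exactly the naive product bound $(|a_3|+|a_5|)(|a_3|^2+|a_4|^2+|H_{2,3}(f)|)$, and with the constant $1.4946575\ldots$ of Lemma \ref{lem-2}(b) this yields $8\cdot 26.4946575\ldots=211.9572\ldots$, precisely your figure. The paper's displayed computation uses $1.4846575\ldots$ (a digit slip against its own Lemma \ref{lem-2}(b)) and moreover writes the product as $211.4846575\ldots$, which matches neither reading; the theorem's stated $211.8771\ldots$ equals $8\cdot 26.4846575\ldots$ and so rests on the mistyped constant. In short, your proposal proves everything the paper actually proves, and the bound you should assert in (v) is $211.9572\ldots$ rather than $211.8771\ldots$; no further optimisation is present in the paper, and carrying one out (as you suggest) would be an improvement on it, not a reconstruction of it.
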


\begin{proof}
The estimates (i) and (ii) easily follow from
\[ |T_{2,2}(f)|\le |a_2|^2+|a_3|^2 \quad\mbox{and}\quad |T_{2,3}(f)|\le |a_3|^2+|a_4|^2, \]
and $|a_2|\le 2$, $|a_3|\le 3$, $|a_4|\le 4$, for the class $\U$.
\medskip
\begin{itemize}
  \item[(iii)] From Lemma \ref{lem-1}, after some calculations we receive
  \[ T_{3,1}(f) = 1-2a_2^2+a_2^4-c_1^2 = 1-2a_2^2+(a_2^2-c_1)(a_2^2+c_1), \]
  and from here
  \[
  \begin{split}
  |T_{3,1}(f)| &\le 1+2|a_2|^2 + \left||a_2|^2+|c_1|\right|\cdot \left|a_2^2+c_1\right|\\
  &\le 1+2\cdot 4+(4+1)\cdot3 = 24,
  \end{split}
  \]
  since $|a_3|=|a_2^2+c_1|\le3$, $|a_2|\le2$, $|c_1|\le1$ (see Lemma \ref{lem-1}).
  \medskip
  \item[(iv)] From \eqref{e3} we have
  \[
  \begin{split}
   T_{3,2}(f) &= (a_2-a_4)(a_2^2 - 2a_3^2 + a_2a_4)\\
   & = (a_2-a_4)\left[(a_2^2 - a_3^2) + (a_2a_4-a_3^2)\right],
     \end{split}
     \]
  and from Lemma \ref{lem-2}(a),
  \[ |T_{3,2}(f)| \le (|a_2|+|a_4|)\left[|a_2|^2 + |a_3|^2) + |H_{2,2}(f)|\right] \le 6\cdot 14 = 84. \]
  \item[(v)] Similarly, using \eqref{e3} and Lemma \ref{lem-2}(b), we obtain
  \[
  \begin{split}
      |T_{3,3}(f)| &\le (|a_3|+|a_5|)\left(|a_3|^2 + |a_4|^2 + |H_{2,3}(f)|\right)\\
      & \le 8\cdot(25+1.4846575\ldots) = 211.4846575\ldots.
  \end{split}
  \]
\end{itemize}
\medskip

The estimates (i)-(iv) are sharp as the function
\[ f_1(z) = \frac{z}{(1-iz)^2} = z+2iz^2-3z^3-4iz^4+5z^5+\cdots \]
shows. At same time $|T_{3,3}(f_1)| = 208$.
\end{proof}

\begin{thm}
Let $f\in\U$ be of the form \eqref{e1} with $a_2=0$. Then
\begin{itemize}
  \item[($i$)] $|T_{2,2}(f)|\le 1$;\vspace{1mm}
  \item[($ii$)] $|T_{2,3}(f)|\le 1$;\vspace{1mm}
  \item[($iii$)] $|T_{3,1}(f)|\le 2$;\vspace{1mm}
  \item[($iv$)] $|T_{3,2}(f)|\le \frac{3}{16}$;\vspace{1mm}
  \item[($v$)] $|T_{3,3}(f)|\le \frac92$.
\end{itemize}
The inequalities (i)-(iv) are sharp.
\end{thm}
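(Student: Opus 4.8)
The plan is to exploit the strong simplification that $a_2=0$ produces in Lemma \ref{lem-1}. Setting $a_2=0$ in \eqref{eq-3} gives $a_3=c_1$, $a_4=c_2$ and $a_5=c_1^2+c_3$, so that by \eqref{e3} the first four determinants collapse to
\begin{equation*}
T_{2,2}=-c_1^2,\quad T_{2,3}=c_1^2-c_2^2,\quad T_{3,1}=1-c_1^2,\quad T_{3,2}=2c_1^2c_2,
\end{equation*}
while $T_{3,3}$ keeps the factored shape of \eqref{e3} with $a_3=c_1$, $a_4=c_2$, $a_5=c_1^2+c_3$. From here the only inputs needed are the coefficient estimates \eqref{eq-5}, and I would dispatch the five items in increasing order of difficulty.

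For $(i)$--$(iii)$ the triangle inequality with $|c_1|\le1$ finishes at once: $|T_{2,2}|=|c_1|^2\le1$ and $|T_{3,1}|\le1+|c_1|^2\le2$, while for $(ii)$ one has $|T_{2,3}|\le|c_1|^2+|c_2|^2\le|c_1|^2+\tfrac14(1-|c_1|^2)^2$, which as a function of $|c_1|^2$ is increasing on $[0,1]$ and hence at most its value $1$ at $|c_1|=1$. The functions $z/(1-z^2)$ and $z/(1-iz^2)$, both having $a_2=0$, realize these three bounds.

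For $(iv)$ I would write $|T_{3,2}|=2|c_1|^2|c_2|$ and insert $|c_2|\le\tfrac12(1-|c_1|^2)$ from \eqref{eq-5}, reducing the problem to maximizing $t(1-t)$ in the single variable $t=|c_1|^2\in[0,1]$. This is elementary: the maximum of $t(1-t)$ on $[0,1]$ equals $\tfrac14$ and is attained at $t=\tfrac12$, and the extremal configuration is produced by choosing $\omega'$ to be a disc automorphism with $|\omega'(0)|^2=\tfrac12$, which simultaneously fixes the extremal $f$ and the corresponding value of $|T_{3,2}|$.

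The real work is $(v)$, and here I would reorganize the second factor exactly as in the proof of Theorem \ref{th-1}$(v)$, namely $a_3^2-2a_4^2+a_3a_5=(a_3^2-a_4^2)+(a_3a_5-a_4^2)$, so that
\begin{equation*}
|T_{3,3}|\le(|a_3|+|a_5|)\bigl(|a_3|^2+|a_4|^2+|H_{2,3}(f)|\bigr).
\end{equation*}
Three of the four ingredients are cheap: $|a_3|=|c_1|\le1$, $|a_4|=|c_2|\le\tfrac12$, and $|H_{2,3}(f)|\le1$ by Lemma \ref{lem-2}$(c)$, which is precisely where the hypothesis $a_2=0$ is used. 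The crux, and the step I expect to be the main obstacle, is the fourth bound $|a_5|\le1$: from $a_5=c_1^2+c_3$ and \eqref{eq-5} one gets
\begin{equation*}
|a_5|\le|c_1|^2+|c_3|\le|c_1|^2+\tfrac13(1-|c_1|^2)=\tfrac13+\tfrac23|c_1|^2\le1.
\end{equation*}
Feeding the four estimates into the product yields $|T_{3,3}|\le2\cdot(1+\tfrac14+1)=\tfrac92$. Since $(v)$ is not claimed to be sharp, there is no need to track the coupled behaviour of $c_1,c_2,c_3$ through a joint optimization; the clean factorization together with the $|a_5|\le1$ estimate is already enough, and the whole difficulty of $(v)$ is concentrated in recognizing that this crude product bound happens to land exactly on $\tfrac92$.
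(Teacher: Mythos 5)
Your treatment of (i), (ii), (iii) and (v) is exactly the paper's argument: the same reductions $a_3=c_1$, $a_4=c_2$, $a_5=c_1^2+c_3$, the same bounds $|a_4|\le\tfrac12$ and $|a_5|\le\tfrac13+\tfrac23|c_1|^2\le1$ from \eqref{eq-5}, the same monotonicity observation for (ii), the same splitting $(a_3^2-a_4^2)+(a_3a_5-a_4^2)$ combined with Lemma \ref{lem-2}(c) for (v), and the same extremal functions $z/(1-z^2)$ and $z/(1-iz^2)$. All of that is fine.

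The problem is (iv). Your chain $|T_{3,2}|=2|c_1|^2|c_2|\le 2|c_1|^2\cdot\tfrac12(1-|c_1|^2)=t(1-t)$ with $t=|c_1|^2$ is correct, but its maximum is $\tfrac14$, attained at $t=\tfrac12$ --- and $\tfrac14>\tfrac3{16}$, so this argument does \emph{not} prove the stated bound $|T_{3,2}(f)|\le\tfrac3{16}$; it proves only $|T_{3,2}(f)|\le\tfrac14$. You assert the maximum is $\tfrac14$ and in the same breath treat item (iv) as settled, without noticing the mismatch. Worse, the extremal configuration you describe (take $\omega'$ to be the disc automorphism with $\omega'(0)=1/\sqrt2$, so $c_2=\tfrac12(1-|c_1|^2)=\tfrac14$) actually yields $|T_{3,2}|=2\cdot\tfrac12\cdot\tfrac14=\tfrac14$, which \emph{exceeds} $\tfrac3{16}$: your own construction refutes the inequality you are supposed to prove. (For what it is worth, the paper's proof contains the identical slip: it writes $2|c_1|^2\cdot\tfrac12(1-|c_1|^2)\le\tfrac3{16}$ and claims equality at $|c_1|^2=\tfrac12$, where the left-hand side equals $\tfrac14$; the value $\tfrac3{16}$ corresponds to $|c_1|^2=\tfrac14$ or $\tfrac34$, not $\tfrac12$. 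The correct sharp constant in (iv), via exactly this route and this extremal function, is $\tfrac14$.) A careful write-up should either prove the bound $\tfrac14$ and flag that the stated $\tfrac3{16}$ is unobtainable, or exhibit a genuinely sharper estimate on $2|c_1|^2|c_2|$ --- none exists here, since the constraint $|c_2|\le\tfrac12(1-|c_1|^2)$ is attained by an admissible $\omega$.
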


\begin{proof}
From Lemma \ref{lem-1} and using $a_2=0$, we have
\[
\begin{split}
 |a_3| &=|c_1|\le 1,\\
|a_4| &=|c_2|\le \frac12 (1-|c_1|^2) \le \frac12,\\
 |a_5| &= |c_3 + c_1^2| \le |c_3|+|c_1|^2 \le \frac13\left( 1-|c_1|^2 -\frac{4|c_2|^2}{1+|c_1|}\right) +|c_1|^2 \\
&\le \frac13 +\frac23|c_1|^2 \le 1.
\end{split}
\]
So, by \eqref{e3} we have
\begin{itemize}
  \item[(i)] $|T_{2,2}(f)| = |-a_3^2| \le 1$;
  \item[(ii)] $|T_{2,3}(f)| = |c_1^2 - c_2^2| \le  |c_1|^2 + |c_2|^2 \le |c_1|^2 +\frac14 \left( 1-|c_1|^2 \right)^2  = \frac14 + \frac12|c_1|^2 +\frac14 |c_1|^4 \le 1$;
  \item[(iii)] $|T_{3,1}(f)| = |1-a_3^2| \le 1+|c_1|^2 \le 2$;
  \item[(iv)]  $|T_{3,2}(f)| = 2|a_3|^2|a_4| \le 2|c_1|^2|c_2|\le  2|c_1|^2\cdot \frac12(1-|c_1|^2)\le    \frac{3}{16}$;
  \item[(v)] $|T_{3,3}(f)| = (|a_3|+|a_5|)\left( |a_3|^2+|a_4|^2+|H_{2.3}(f)| \right) \le 2\cdot\left( 1+\frac14+1 \right) =\frac92$.
  \end{itemize}

  \medskip

  The estimates (i) and (ii) are sharp due to the function $f_2(z)=\frac{z}{1-z^2} = z+z^3+z^5+\cdots$, while (iii) is sharp due to $f_3(z)=\frac{z}{1-iz^2} = z+iz^3-z^5\cdots$.
  
  In the estimate (iv), equality is attained for $|c_1|^2=\frac12$, i.e., for $|c_1|=\frac{1}{\sqrt2}$. The result is sharp with extremal function $f_4$ such that
  \[ \frac{z}{f_4(z)} = 1-z \int_0^z \frac{1/\sqrt2 + t}{1 + 1/\sqrt2t}dt, \]
  well defined because by equating coefficients we receive $a_2=0$ and the function $\omega_1(z) = \int_0^z \frac{1/\sqrt2 + t}{1 + 1/\sqrt2t}dt$ has the properties $\omega_1(0)=0$, and $|\omega_1(z)|< 1$, $|\omega_1'(z)|\le 1$ for all $z\in\D$. 
\end{proof}

\begin{thm}\label{th-3}
If $f\in\es$ has the form \eqref{e1}, then
\begin{itemize}
  \item[($i$)] $|T_{3,2}(f)|\le 86.1684\ldots$;\vspace{1mm}
  \item[($ii$)] $|T_{2,3}(f)|\le 239.1895\ldots$.
\end{itemize}
\end{thm}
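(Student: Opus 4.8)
The plan is to handle both parts by the same factor-and-estimate strategy used for the class $\U$ in the proof of Theorem \ref{th-1}, now fed with the coefficient bounds and Hankel estimates available for the whole class $\es$. For part (i) I would begin from the factored form of $T_{3,2}$ in \eqref{e3} and split its second factor exactly as in Theorem \ref{th-1}(iv),
\[
T_{3,2}(f) = (a_2 - a_4)\bigl[(a_2^2 - a_3^2) + (a_2 a_4 - a_3^2)\bigr] = (a_2 - a_4)\bigl[(a_2^2 - a_3^2) + H_{2,2}(f)\bigr],
\]
using $H_{2,2}(f) = a_2 a_4 - a_3^2$. The triangle inequality then gives
\[
|T_{3,2}(f)| \le (|a_2| + |a_4|)\bigl(|a_2|^2 + |a_3|^2 + |H_{2,2}(f)|\bigr),
\]
into which I insert the de Branges bounds $|a_2|\le 2$, $|a_3|\le 3$, $|a_4|\le 4$ and Lemma \ref{lem-3}(a), which supplies $|H_{2,2}(f)|\le 1.3614\ldots$ for $f\in\es$. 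This yields
\[
|T_{3,2}(f)| \le 6\cdot\bigl(4 + 9 + 1.3614\ldots\bigr) = 6\cdot 14.3614\ldots = 86.1684\ldots,
\]
the asserted bound. The only non-elementary ingredient, and hence the sole obstacle, is the full-class second Hankel estimate of Lemma \ref{lem-3}(a); everything else is the algebraic identity plus de Branges' theorem.

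For part (ii), by \eqref{e3} we have $T_{2,3}(f) = a_3^2 - a_4^2$, so the triangle inequality together with $|a_3|\le 3$ and $|a_4|\le 4$ gives at once
\[
|T_{2,3}(f)| = |a_3^2 - a_4^2| \le |a_3|^2 + |a_4|^2 \le 9 + 16 = 25.
\]
Since $25 < 239.1895\ldots$, the stated inequality follows immediately; indeed this elementary argument delivers the much stronger constant $25$, which already appears (and is sharp) in Theorem A, so part (ii) as worded presents no real obstacle. I would, however, flag that the target value factors as $239.1895\ldots = 8\cdot(25 + 4.89869\ldots) = (|a_3|+|a_5|)\bigl(|a_3|^2 + |a_4|^2 + |H_{2,3}(f)|\bigr)$, using $|a_5|\le 5$ and Lemma \ref{lem-3}(b); this is exactly the natural upper bound one obtains for the third-order determinant $T_{3,3}(f)$, so the numerical constant recorded in part (ii) is in truth a $T_{3,3}$-estimate rather than a $T_{2,3}$-estimate.
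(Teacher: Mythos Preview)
Your proof of part (i) is exactly the paper's argument: the same factorisation, the same splitting of the second factor as $(a_2^2-a_3^2)+H_{2,2}(f)$, and the same numerical input from Lemma~\ref{lem-3}(a).

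For part (ii) your diagnosis is spot-on and in fact matches the paper perfectly: the paper's own proof of (ii) does not bound $T_{2,3}$ at all but rather $T_{3,3}$, via
\[
|T_{3,3}(f)| \le (|a_3|+|a_5|)\bigl(|a_3|^2+|a_4|^2+|H_{2,3}(f)|\bigr) \le 8\cdot(25+4.89869\ldots)=239.1895\ldots,
\]
exactly the computation you reverse-engineered from the constant. So the label ``$T_{2,3}$'' in the statement is a typo for ``$T_{3,3}$'', and the intended argument is the one you describe. Your alternative observation that the literal $T_{2,3}$-inequality is already covered by Theorem~A with the sharp constant $25$ is correct as well, but it is not what the authors are after here.
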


\begin{proof}$ $
\begin{itemize}
  \item[(i)] Similarly as in the proof of Theorem \ref{th-1} we have
 \[
 \begin{split}
 |T_{3,2}(f)| &\le  \left(|a_2|+|a_4|\right)\left(|a_2|^2 + |a_3|^2 + |H_{2,2}(f)|\right) \\
 & \le 6\cdot(13+1.3614\ldots) = 86.1684\ldots,
 \end{split}
 \]
 where we used Lemma \ref{lem-3}(a).
    \item[(ii)] Also,
 \[
 \begin{split}
 |T_{3,3}(f)| &\le  \left(|a_3|+|a_5|\right)\left(|a_3|^2 + |a_4|^2 + |H_{2,3}(f)|\right) \\
 & \le 8\cdot(25+4.89869\ldots) = 239.1895\ldots,
 \end{split}
 \]
 where we used Lemma \ref{lem-3}(b).
\end{itemize}
\end{proof}

\begin{thm}
If $f\in\es$ has the form \eqref{e1} with $a_2=0$, then
\begin{itemize}
  \item[($i$)] $|T_{3,2}(f)|\le \frac43$;\vspace{1mm}
  \item[($ii$)] $|T_{2,3}(f)|\le 7.3883\ldots$.
\end{itemize}
\end{thm}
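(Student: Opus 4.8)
The plan is to follow the computations of Theorem \ref{th-3} almost verbatim, but to exploit the hypothesis $a_2=0$ in two ways: first to collapse the determinants algebraically, and then to replace the crude de Branges bounds by the much sharper coefficient estimates available for univalent functions whose second coefficient vanishes. Once those coefficient bounds are in hand, each part reduces to a single application of the triangle inequality, exactly as in the earlier proofs. I note in passing that, consistently with the labelling already used in Theorem \ref{th-3}, part (ii) is really an estimate for $T_{3,3}$ rather than $T_{2,3}$.

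For part (i) I would substitute $a_2=0$ into the formula for $T_{3,2}$ from \eqref{e3}. Since $T_{3,2}(f)=(a_2-a_4)(a_2^2-2a_3^2+a_2a_4)$, setting $a_2=0$ collapses it to the single monomial $T_{3,2}(f)=2a_3^2a_4$, so that $|T_{3,2}(f)|=2|a_3|^2|a_4|$. It then suffices to insert the sharp bounds $|a_3|\le1$ (the Fekete--Szeg\H{o} inequality $|a_3-a_2^2|\le1$ evaluated at $a_2=0$) and $|a_4|\le\tfrac23$ (the sharp fourth--coefficient bound for univalent functions with vanishing second coefficient, realised by the three-fold symmetric Koebe function $z/(1-z^3)^{2/3}$), which give $2\cdot1\cdot\tfrac23=\tfrac43$ immediately.

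For part (ii) I would reuse the factorisation already exploited in Theorem \ref{th-3}, namely
\[
T_{3,3}(f)=(a_3-a_5)\bigl[(a_3^2-a_4^2)+H_{2,3}(f)\bigr],
\]
so that $|T_{3,3}(f)|\le(|a_3|+|a_5|)\bigl(|a_3|^2+|a_4|^2+|H_{2,3}(f)|\bigr)$. Into this I would feed $|a_3|\le1$, $|a_4|\le\tfrac23$ as above, the corresponding sharp bound for $|a_5|$ under the side condition $a_2=0$, and the estimate $|H_{2,3}(f)|\le2.02757\ldots$ from Lemma \ref{lem-3}(c); the resulting arithmetic should return the stated constant $7.3883\ldots$.

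The determinant bookkeeping is routine; the genuine content lies entirely in assembling the correct sharp (or best known) bounds for $|a_4|$ and, above all, $|a_5|$ for the class $\es$ under $a_2=0$, since the de Branges bounds $|a_4|\le4$, $|a_5|\le5$ are far too weak to yield these constants. I expect the main obstacle to be pinning down the extremal value of $|a_5|$: unlike $|a_3|$ and $|a_4|$, which follow from one-line classical inequalities and have round extremal values, the fifth-coefficient bound for $a_2=0$ is not attained by a single $k$-fold symmetric Koebe function (the two-fold map $z/(1-z^2)$ already gives $|a_5|=1$, while the four-fold map gives only $\tfrac12$), so it appears to require a genuine Fekete--Szeg\H{o}/logarithmic-coefficient optimisation. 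Its non-round form is precisely what is signalled by the ``$\ldots$'' in the statement, and locating or citing it is the step I would allocate the most care to.
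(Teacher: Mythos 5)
Your proposal matches the paper's proof in all essentials: the paper likewise feeds the sharp $a_2=0$ coefficient bounds $|a_3|\le1$, $|a_4|\le\tfrac23$ (cited from \cite{OTT, tbilisi}) into the same estimates used for Theorem \ref{th-3}, and you correctly spot that part (ii) concerns $T_{3,3}$ rather than $T_{2,3}$. Your part (i) is in fact marginally cleaner: the paper reuses $(|a_2|+|a_4|)(|a_2|^2+|a_3|^2+|H_{2,2}(f)|)\le\tfrac23(1+1)=\tfrac43$, whereas your direct collapse of \eqref{e3} to $|T_{3,2}(f)|=2|a_3|^2|a_4|$ avoids the Hankel determinant altogether. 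The one ingredient you leave unresolved is the fifth-coefficient bound: the paper cites $|a_5|\le\tfrac34+\tfrac{1}{\sqrt7}=1.12796\ldots$ from \cite{OTT, tbilisi}, and indeed $(1+1.12796\ldots)\bigl(1+\tfrac49+2.02757\ldots\bigr)=7.3883\ldots$; your instinct that this is the genuinely nontrivial, non-round input, not attained by a symmetric Koebe function, is exactly right, but your argument for (ii) is incomplete until that constant is supplied.
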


\begin{proof}
Since $a_2=0$, then by \cite{OTT, tbilisi} we have
$|a_3| \le 1$, $|a_4|\le\frac23$, $|a_5|\le \frac34+\frac{1}{\sqrt7}=1.12796\ldots$, $|H_{2,2}(f)|\le1 $, and $|H_{2,3}(f)|\le 2.02757\ldots $ (by Lemma \ref{lem-3}(c). We receive the estimates by applying the same method as in Theorem \ref{th-3}.
\end{proof}

\vspace{5mm}

\end{document}